\newcommand{\Z}{\mathbb{Z}}
\newcommand{\cL}{\mathcal{L}}
\newcommand{\cR}{\mathcal{R}}
\setlist[itemize]{leftmargin=0.5cm}
\setlist[enumerate]{leftmargin=1.2cm}
 \newtheorem{thm}{Theorem}[section]
 \newtheorem{lem}[thm]{Lemma}
 \newtheorem{prop}[thm]{Proposition}
 \theoremstyle{definition}
 \newtheorem{defn}[thm]{Definition}
 \theoremstyle{remark}
 \newtheorem{rem}[thm]{Remark}
 \newtheorem{notn}[thm]{Notation}
 \newtheorem{example}[thm]{Example}
 \numberwithin{equation}{subsection}
\definecolor{zielony}{rgb}{0.5, 0.9, 0.1}
\definecolor{czerwony}{rgb}{0.9, 0.2, 0.1}
\definecolor{niebieski}{rgb}{0.3, 0.1, 0.9}
\begin{document}

\title[Phylogenetic invariants for $\Z_3$ scheme-theoretically]
 {Phylogenetic invariants for $\Z_3$ scheme-theoretically}

\author{ Maria Donten-Bury }

\address{Instytut Matematyki UW, Banacha 2, PL-02097 Warszawa}

\email{M.Donten@mimuw.edu.pl}

\subjclass[2010]{13P25, 52B20}

\keywords{phylogenetic tree, phylogenetic invariant, group-based model}

\date{July 7, 2015.}

\thanks{This research was partially supported by a grant of Polish National Science Center (2012/07/N/ST1/03202).}


\begin{abstract}
We study phylogenetic invariants of general group-based models of evolution with group of symmetries $\Z_3$. We prove that complex projective schemes corresponding to the ideal $I$ of phylogenetic invariants of such a model and to its subideal $I'$ generated by elements of degree at most 3 are the same. This is motivated by a conjecture of Sturmfels and Sullivant~\cite[Conj.~29]{SS}, which would imply that $I = I'$.
\end{abstract}

\maketitle

\section{Introduction}

One of the most important questions in phylogenetic algebraic geometry, motivated by applications, is to determine the ideal of phylogenetic invariants, i.e. the ideal of polynomials vanishing on an algebraic variety corresponding to a model of evolution. It turns out that even determining the minimal degree in which this ideal is generated is a difficult problem. This question is often addressed in the setting of general group-based models of evolution (see e.g.~\cite[Sect.~8.10]{Ph}), i.e. models with an abelian group of symmetries. The simplest (but having very interesting properties, studied in~\cite{BW}) example of this class is the binary model, $G \simeq \Z_2$. The Kimura 3-parameter model with $G \simeq \Z_2\times \Z_2$ can be understood as its generalization, important from the point of view of motivation coming from computational biology, see e.g.~\cite{ES, SS}. Another small example is the model with $G \simeq \Z_3$, considered in this note. It is known that algebraic varieties associated with group-based models (i.e. their geometric models) are toric,~\cite{SSE, SS}. This class appears also in connection with theoretical physics, see~\cite{Man}.

In~\cite[Conj.~29]{SS} Sturmfels and Sullivant conjecture that the ideal of phylogenetic invariants for a group-based model with group of symmetries $G$ is generated in degree at most $|G|$. The authors give a proof for the binary model and provide some experimental data supporting the conjecture for small trees and groups. In~\cite{phylocomp} we analyze a few more examples with computational methods, and also suggest a geometric approach to the problem of determining phylogenetic invariants.

Let $I$ be the ideal of phylogenetic invariants for a tree $T$ and an abelian group $G$ and $I'$ be the ideal generated by the invariants in degree at most $|G|$. If~\cite[Conj.~29]{SS} is true, it implies that $I = I'$. Since comparing these two ideals is a difficult task, there have been a few attempts to compare geometric objects defined by them: projective schemes, sets of zeroes, or even sets of zeroes in the open orbit of a toric model. This last approach is presented in~\cite{CFSM}. The set-theoretical version of this conjecture for the class of equivariant models introduced in~\cite{DK} is considered in~\cite{Draisma2012}. In~\cite{constrdeg} Micha\l{}ek proves the scheme-theoretical version for the 3-Kimura model, and also that for a fixed abelian group $G$ there is a bound on the degree in which $I$ is generated, independent on the size of the tree.

The aim of this note is to give a combinatorial proof of the scheme-theoretic version for $G \simeq \Z_3$, using ideas similar to those presented in~\cite{constrdeg}. We work over the field of complex numbers~$\mathbb{C}$.

\begin{thm}\label{theorem}
For $G \simeq \Z_3$ and any tree $T$ the projective schemes defined by the ideal $I$ of phylogenetic invariants of the corresponding model and its subideal $I'$ generated by elements of degree 2 and 3 are the same.
That is, the saturation of $I'$ with respect to the irrelevant ideal is equal to $I$.
\end{thm}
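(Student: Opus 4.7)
The ideal $I$ is a toric (binomial) prime ideal: the variety is the image of a monomial parametrization whose Fourier coordinates $x_\ell$ are indexed by consistent edge-labelings $\ell$ (assignments of elements of $\Z_3$ to the edges of $T$ so that the labels around each interior vertex sum to zero). Being prime, $I$ is automatically saturated with respect to the irrelevant ideal; it is generated by binomials $m_1 - m_2$ where $m_1, m_2$ are monomials in the $x_\ell$ whose images under the parametrization agree, i.e.\ which induce the same multiset of group elements on each edge of $T$. Proving the theorem thus reduces to showing that every such binomial lies in the saturation of $I'$: for each $m_1 - m_2 \in I$ and each variable $x_\ell$, some power $x_\ell^N(m_1 - m_2)$ lies in $I'$. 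Geometrically, this is the statement that the vanishing loci of $I$ and $I'$ coincide on each affine chart $\{x_\ell \neq 0\}$.

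The plan follows the strategy of~\cite{constrdeg}, starting with a reduction to the case where $T$ is a star (a tree with a single interior vertex). A general tree is obtained by gluing stars along interior edges, and the toric ideal of $T$ is controlled by the star ideals at each interior vertex together with matching conditions on interior-edge labels. Such a gluing reduction---that the saturation statement for $T$ follows from the star cases---is available in the group-based setting and is precisely how~\cite{constrdeg} handles $\Z_2 \times \Z_2$. From this point on I assume $T$ is a star: valid labelings are maps $\ell : \{1,\ldots,n\} \to \Z_3$ with $\sum_i \ell(i) \equiv 0 \pmod 3$, and the binomials to treat are those $m_1 - m_2$ whose marginal functions $(i, g) \mapsto \#\{\ell \text{ in } m_j : \ell(i) = g\}$ agree for $j = 1, 2$.

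The main combinatorial step, which I expect to be the principal obstacle, is to show that any such binomial $m_1 - m_2$ of arbitrary degree $k$ can be rewritten modulo $I'$ after multiplication by a suitable monomial $M$. Equivalently, one multiset of valid labelings must be transformable into another with matching marginals by a sequence of moves, each a degree-$2$ or degree-$3$ binomial in $I'$, possibly after adjoining a bounded collection of auxiliary labelings (encoded by $M$) to unlock the elementary rewrites. The degree-$3$ relations at our disposal correspond to triple identities in $\Z_3$ such as $0+1+2 \equiv 0$ and $g+g+g \equiv 0$, while degree-$2$ relations exchange labels between two labelings in a way that preserves both marginals and the sum-zero condition. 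The core difficulty is to handle binomials whose two sides differ on many edges, and to do so while keeping the multiplier $M$ compatible across all coordinates $x_\ell$, so as to recover $x_\ell^N(m_1 - m_2) \in I'$ for every $\ell$. I expect an induction on $k$ combined with a finite case analysis at the star, exploiting that $|\Z_3| = 3$ sharply restricts the atomic rearrangements available; once the star case is settled, the gluing reduction finishes the proof.
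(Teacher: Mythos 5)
Your setup is sound and matches the paper's: $I$ is a toric prime, hence saturated, so the content is the inclusion $I \subseteq I'\colon J^{\infty}$; the reduction to claw (star) trees is exactly Proposition~\ref{prop_reduction_to_claws}, obtained from the gluing description of \cite{SS}; and the reformulation in terms of multisets of zero-sum $\Z_3$-labelings with equal marginals is the right combinatorial model. One small point of framing: you do not need a single multiplier $M$ ``compatible across all coordinates'' --- $a \in I'\colon J^{\infty}$ iff for each variable \emph{separately} some power times $a$ lies in $I'$ --- and by the symmetry of the underlying polytope under relabeling (\cite{constrdeg}) it suffices to treat the single variable indexed by the trivial flow; this reduction is what makes the subsequent normalizations workable.

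The genuine gap is that the core of the argument is entirely absent: you identify the combinatorial rewriting step as ``the principal obstacle'' and then only state that you ``expect an induction on $k$ combined with a finite case analysis.'' That step \emph{is} the paper. Concretely, the argument proceeds by (i) multiplying by the trivial-flow variable and splitting every flow into ``pairs'' (two nontrivial entries) and ``triples'' (three nontrivial entries) via quadrics (Lemma~\ref{flow_types}); (ii) using the cubic $(a,b,c)_{g_1}+(x,y,z)_{g_2}+0=(a,x)+(b,y)+(c,z)$ to ensure no side of the relation carries both types of triples (Lemma~\ref{different_triple_types}); (iii) an induction on the total number of nontrivial entries, where at each step one exhibits a quadric or cubic relation producing a flow common to both sides, which is then cancelled; and (iv) a counting lemma (Lemma~\ref{counting_argument}) comparing occurrences of two indices on the two sides to rule out the configurations in which no such relation exists. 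The case analysis over configurations of pairs and triples (no pairs; two pairs spanning four, three, or two indices) occupies sections~\ref{section_no_pairs}--\ref{section_two_indices} and is where all the difficulty lies; nothing in the proposal indicates how to carry it out or why the process terminates. As written, this is a correct plan, not a proof.
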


Note that this result implies also the set-theoretic one: to check whether a point lies in the set of zeroes of the ideal of phylogenetic invariants for $\Z_3$ it is sufficient to see if the invariants of degree at most 3 vanish.

The structure of the paper is as follows. In the next section we describe the background for the problem and reduce the statement of Theorem~\ref{theorem} to a purely combinatorial one, which is then solved in sections~\ref{section_no_pairs}-\ref{section_two_indices}. The notion of phylogenetic invariants is recalled in sections~\ref{section_phylo} and~\ref{phylo_inv}. Section~\ref{section_saturation} is devoted to explaining the role of saturation in the main result, which leads to a reduction of Theorem~\ref{theorem} to a combinatorial problem, and in section~\ref{section_deletion} we describe a basic step in the proof. Several examples illustrating these ideas are provided.
Then, in section~\ref{subsection_outline}, we sketch the relations between sections~\ref{section_no_pairs}-\ref{section_two_indices}, containing the main part of the proof: we analyze there a few separate cases depending on the form of a chosen phylogenetic invariant.

\subsection*{Acknowledgements.} Thanks to Mateusz Micha\l{}ek for comments on preliminary versions of the proof, and to him and Weronika Buczy\'nska for discussions.

The author would also like to thank anonymous referees for many important suggestions for improving the presentation of results.

\section{Background and outline of the proof}

In this section we recall basic definitions concerning group-based models, ideals of phylogenetic invariants and their combinatorial presentation. We explain how using the saturation simplifies dealing with phylogenetic invariants for $\Z_3$ and give the outline for the remaining part of the proof of the main result.

\subsection{Phylogenetic models}\label{section_phylo}
Let us recall briefly that by a \emph{general group-based model} with a finite abelian group of symmetries $G$ we understand a triple $(T,W,\widehat{W})$, where $T$ is a tree, $W$ is the regular representation of $G$ and $\widehat{W}\simeq \mathrm{End}(W)^G$ is the space of $G$-invariant endomorphisms of $W$. With such a structure one can associate a geometric model: an algebraic variety, affine or projective. This can be done in two equivalent ways, see e.g.~\cite{SS, mateusz}. The first one is to consider the closure of the image of a parametrization map (see e.g.~\cite{BW, SS}), the second, which will be used here, is to take a toric variety corresponding to a certain lattice polytope (see~\cite{mateusz} and section~\ref{phylo_inv} below).

The \emph{ideal of phylogenetic invariants} of $(T,W,\widehat{W})$ is the ideal vanishing on the corresponding geometric model. The construction of phylogenetic invariants for group-based models (or, more generally, equivariant models) can be reduced to the case of \emph{claw trees} $K_{1,r}$, i.e. trees with one inner vertex and $r$ leaves, see e.g.~\cite[Sect.~5]{SS} and~\cite{sull}. Roughly speaking, the tree $T$ can be constructed by gluing claw trees along edges and~\cite[Thm~26]{SS} states that the ideal of phylogenetic invariants of $T$ is the sum of ideals generated by certain extensions of phylogenetic invariants of these claw trees to $T$ and an ideal generated by quadrics. Therefore, if the ideals of phylogenetic invariants for $G$ and all $K_{1,r}$ are generated in degree $d \geq 2$, then so is the ideal of phylogenetic invariants for $G$ and any tree $T$. Hence the general idea is that it is sufficient to prove degree bounds for claw trees. This applies also to the case where the saturation is used.

\begin{prop}\label{prop_reduction_to_claws}
The statement of Theorem~\ref{theorem} for the claw trees, i.e. $T = K_{1,r}$, where $r$ runs through $\mathbb{N}$, implies the general statement, for any tree~$T$.
\end{prop}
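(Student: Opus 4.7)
The plan is to apply directly the decomposition theorem \cite[Thm~26]{SS} recalled in the preceding paragraph: if $T$ has internal vertices $v$ of valence $r_v$, then
\[
I \;=\; \sum_{v} \widetilde{J_v} \;+\; Q,
\]
where $J_v$ is the ideal of phylogenetic invariants of the local claw tree $K_{1,r_v}$, $\widetilde{J_v}$ is its extension to the coordinate ring of $T$, and $Q$ is an explicit ideal generated by quadrics coming from edge-gluing. Let $I' \subseteq I$ be the subideal generated in degrees $\leq 3$, and for each $v$ let $J_v' \subseteq J_v$ be the analogous subideal on the claw tree.

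By hypothesis, $J_v$ and $J_v'$ cut out the same projective scheme for every $v$. The next step is to argue that the extension $J_v \mapsto \widetilde{J_v}$ preserves this equality, so that $\widetilde{J_v'}$ and $\widetilde{J_v}$ also define the same closed subscheme of the projective space attached to $T$. Granting this, observe that $Q \subseteq I'$ because $Q$ is quadric, and each $\widetilde{J_v'} \subseteq I'$ because its generators have degree $\leq 3$. Hence
\[
\sum_v \widetilde{J_v'} + Q \;\subseteq\; I' \;\subseteq\; I \;=\; \sum_v \widetilde{J_v} + Q.
\]
Passing to projective schemes and using $V(A+B) = V(A) \cap V(B)$, the outer two ideals define the same subscheme, so the same holds for $I'$. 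Equivalently, $I$ and $I'$ have the same saturation with respect to the irrelevant ideal, which is Theorem~\ref{theorem} for $T$.

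The main obstacle I expect is the transport step in the second paragraph: upgrading the degree-bound reduction of \cite{SS} to a scheme-theoretic one, i.e.\ verifying that $V(\widetilde{J_v'}) = V(\widetilde{J_v})$ in the projective space associated to $T$. The extension used in \cite[Thm~26]{SS} is prescribed by an explicit combinatorial rule on coordinates indexed by leaf-labellings, and while it is essentially a pullback along a projection of toric coordinates, one must check that no spurious components supported in the base locus of the irrelevant ideal are introduced. Once this functorial property is in place, the proposition reduces to the inclusion sandwich above.
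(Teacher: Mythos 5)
Your set-up coincides with the paper's: both start from the decomposition of $I$ given by \cite[Thm~26]{SS} into gluing quadrics $Q$ plus extensions of claw-tree invariants, and your sandwich $\sum_v \widetilde{J_v'}+Q\subseteq I'\subseteq I=\sum_v \widetilde{J_v}+Q$, combined with the fact that passing to ideal sheaves commutes with sums, would indeed finish the argument \emph{once} the transport step is established. The problem is that the transport step you "grant" --- that $\widetilde{J_v'}$ and $\widetilde{J_v}$ define the same closed subscheme --- is the entire content of the proposition, and your proposal contains no argument for it. Two points make it genuinely nontrivial rather than a routine functoriality check. First, the extension $J_v\mapsto\widetilde{J_v}$ is \emph{not} extension of an ideal along a ring homomorphism: a single group-based flow on $K_{1,r_v}$ has many lifts to a flow on $T$, so $\widetilde{J_v}$ is generated by all lifts of binomials in $J_v$, and no abstract pullback argument applies. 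Second, the two saturation statements live in different polynomial rings with different irrelevant ideals (variables indexed by flows on $K_{1,r_v}$ versus flows on $T$), so even formulating "the extension preserves scheme-theoretic equality" requires relating powers of one irrelevant ideal to powers of the other.

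The paper closes exactly this gap by an explicit combinatorial lifting. Given an extension $\overline{B}$ to $T$ of a claw-tree binomial $B$, one takes the decomposition of $B$ (after multiplying by a suitable power of a variable, since the claw-tree hypothesis is a saturation statement) as a sum of multiples of invariants $B_j$ of degree at most $3$, and lifts it to $T$ term by term: at each step one chooses a $B_{j_0}$ sharing a monomial with the current binomial, and since a common monomial is a common multiset of group-based flows, the extension already fixed for that monomial dictates a compatible extension $\overline{B_{j_0}}$; one then replaces $\overline{B}$ by $\overline{B}-\overline{B_{j_0}}$ and iterates. After exhausting the $B_j$, the remainder is an invariant of $T$ whose local data at $v$ is trivial, and it is absorbed by the gluing quadrics $Q$ as in the proof of \cite[Thm~26]{SS}. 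Some such monomial-matching argument (equivalently, a statement about saturations in toric fiber products) is what your proof is missing; without it the proposal reduces the proposition to a restatement of itself.
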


\begin{proof}
By~\cite[Thm~26]{SS} the ideal of phylogenetic invariants of a tree~$T$ is generated by quadrics and binomials which are extensions to~$T$ of invariants of all claw trees $K_{1,r}$ embedded (as a star of a vertex) in~$T$. The construction of these extensions is described in~\cite[Lem.~24]{SS} in terms of \emph{group-based flows}. The general idea of the proof is explained below without using this notion, but it is very useful for filling in the details. It suffices to show that if any binomial $B$ in the ideal of phylogenetic invariants of $K_{1,r}$ can be written as a sum of multiples $\{B_j \colon j \in J \}$ of invariants of degree at most $|G|$, then any its extension $\overline{B}$ to~$T$ also has such a decomposition. 

We can construct the decomposition of $\overline{B}$ inductively, choosing $B_{j_0}$ which has a common monomial with~$B$, extending it to some $\overline{B_{j_0}}$ on the whole tree~$T$, which has a monomial in common with~$\overline{B}$, and then repeating the operation for the binomial $\overline{B}-\overline{B_{j_0}}$ and the set $\{B_j \colon j \in J\setminus \{j_0\}\}$. And a suitable extension $\overline{B_{j_0}}$ comes in a natural (but not necessarily unique) way from the chosen extension $\overline{B}$ (this follows from the fact that $B$ and $B_{j_0}$ share the same monomial, hence also a multiset of group-based flows).

After performing this step for all $j \in J$ we obtain a phylogenetic invariant of~$T$ which must be a sum of multiples of quadric invariants. These quadrics correspond (in terms of group-based flows) to swapping data encoding extensions of invariants from $K_{1,r}$ to $T$, associated with trees $T_1,\ldots,T_r$ which are glued to $K_{1,r}$ along edges to obtain~$T$, similarly as in the proof of~\cite[Thm~26]{SS}. 
\end{proof}

Therefore, in the next sections we restrict to the case of claw trees.

Note that in the proof of Proposition~\ref{prop_reduction_to_claws} we do not use any information on the group~$G$, hence for any finite abelian group~$G$ it is sufficient to prove the scheme-theoretic version of~\cite[Conj.~29]{SS} just for claw trees.

\subsection{Combinatorial description of phylogenetic invariants}\label{phylo_inv}

We use the notation of~\cite[Sect.~3]{constrdeg} for phylogenetic invariants on a claw tree: we present them as relations between group-based flows on the tree (see~\cite[Def.~3.5]{constrdeg} and~\cite[Sect.~4]{SS}). We recall here the notion of group-based flows only in the case of claw trees.

\begin{defn}\label{flows}
A \emph{group-based flow} $F = (f_1,\ldots,f_r) \in G^r$ on $K_{1,r}$ is an assignment of elements $f_1.\ldots,f_r$ of $G$ to the edges $e_1,\ldots,e_r$ of $K_{1,r}$ such that $f_1+\ldots+f_r = 0$.
\end{defn}

By~\cite{mateusz}, group-based flows correspond to vertices of the lattice polytope $P$ describing the toric structure of the associated geometric model (as in~\cite[Chapter~2]{ToricBook}). Note that $P$ is naturally placed in a big lattice $\Lambda$ encoding all assignments of group elements to the edges of the tree: to each edge $e_i$ we assign $|G|$ coordinates, and the group element $f_i$ assigned to $e_i$ is described by setting all corresponding coordinates to 0 except of the one corresponding to $f_i$, which becomes 1. However, $P$ has to be considered in a sublattice $\Lambda_P$ spanned by its vertices to obtain the correct toric structure. 

\begin{notn}
From now on, let $I$ be the ideal of phylogenetic invariants on a chosen claw tree $K_{1,r}$ and $I'$ be the ideal generated by the invariants in degree at most~$|G|$. 
\end{notn}

The ideal $I$ of phylogenetic invariants is an ideal of $\mathbb{C}[x_{F_1},\ldots,x_{F_p}]$, where $F_1,\ldots,F_p$ are all vertices of $P$, i.e. all group-based flows on $K_{1,r}$. Its elements correspond to relations between vertices of $P$. More precisely, by~\cite[Lem.~4.1]{Sturmfelsksiazka} we can express generators of $I$ as relations $\sum L_i=\sum R_j$, where $\cL = \{L_i\}$ and $\cR = \{R_j\}$ are finite multisets of group-based flows of the same cardinality. We add flows coordinatewise in $G^r$, counting different group elements at each index (which corresponds to addition in the lattice $\Lambda$). In terms of polynomials, the relation $\sum L_i=\sum R_j$ corresponds to a binomial $\prod_{L_i \in \cL} x_{L_i} - \prod_{R_j \in \cR} x_{R_j}$. We assume $\cL\cap \cR = \emptyset$. 

\subsection{Saturation}\label{section_saturation}
We consider the saturation of $I'$ in the irrelevant ideal $J = (x_{F_1},\ldots,x_{F_p})$. The aim is to prove that $I':J^{\infty} = I$. Since~$I$ is prime and thus saturated, the nontrivial part is to show that~$I$ is contained in the saturation of~$I'$. Take $a \in I$. It belongs to $I':J^{\infty}$ if there is $n \in \mathbb{N}$ such that $aJ^n \subseteq I'$. This is equivalent to saying that for every variable $x_{F_q}$ there is $n_q \in \mathbb{N}$ such that $ax_{F_q}^{n_q} \subseteq I'$. Moreover, by~\cite[Lem.~4.3]{constrdeg} the situation is symmetric with respect to permutations of variables, hence it is sufficient to prove this statement for one chosen variable. We may choose the one corresponding to the trivial flow, which will be denoted by 0. In terms of relations between multisets of flows multiplying an element of $I$, $\prod_{L_i \in \cL} x_{L_i} - \prod_{R_j \in \cR} x_{R_j}$, by a power of this variable is adding the same number of copies of the trivial flow to $\cL$ and $\cR$. Let us explain how this allows to simplify the structure of phylogenetic invariants. We concentrate on the case of $G\simeq \Z_3$.

\begin{lem}\label{flow_types} Let $a$ be an element of the ideal $I$ of phylogenetic invariants for $\Z_3$, represented by multisets of flows $\cL$ and $\cR$. Then there is $n \in \mathbb{N}$ such that $x_0^na$ can be decomposed as a sum of an element of $I'$ and an element of $I$ represented by $\cL'$ and $\cR'$ which consist only of flows of two types:
\begin{enumerate}
\item pairs -- flows with only two nontrivial entries,
\item triples -- flows with only three nontrivial entries.
\end{enumerate}
\end{lem}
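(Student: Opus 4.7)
The plan is to reduce every ``bad'' flow (one with at least four nontrivial entries) to pairs and triples by repeatedly applying a splitting quadric in $I'$, trading one copy of the trivial flow for each split.

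The combinatorial core is the claim that in $\Z_3$ any flow $F$ with $k\geq 4$ nontrivial entries can be written as $F=G_1+G_2$ in $\Lambda$, where $G_1,G_2$ are nonzero flows whose supports form a nontrivial partition of $\mathrm{supp}(F)$. To see this, let $k_1,k_2$ be the numbers of nontrivial entries of $F$ equal to $1$ and $2$, respectively; the condition $\sum f_i=0$ in $\Z_3$ forces $k_1\equiv k_2\pmod 3$. If both $k_1$ and $k_2$ are at least $1$, take one position with value $1$ and one with value $2$ to form a pair flow $G_1$, so that $G_2$ is supported on the remaining $k-2\geq 2$ positions. Otherwise all nontrivial entries are equal, so $k$ is a multiple of $3$ and at least $6$; picking any three nontrivial positions gives a triple flow $G_1$, with $G_2$ supported on the remaining $k-3\geq 3$ positions. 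Either way $G_1,G_2$ have strictly fewer nontrivial entries than $F$.

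Given such a splitting, the quadric $x_Fx_0-x_{G_1}x_{G_2}$ belongs to $I$, because at each coordinate of $\Lambda$ the two sides carry the same multiset: outside $\mathrm{supp}(F)$ all four flows are zero, and on a position in $\mathrm{supp}(F)$ the nontrivial value of $F$ is contributed on the left by $F$ and on the right by exactly one of $G_1,G_2$ (while the other contributes $0$, matching the trivial flow on the left). This quadric has degree $2$ and hence lies in $I'$. Multiplying $a$ by a sufficiently large power $x_0^n$ and subtracting a suitable monomial multiple of such a quadric replaces the factor $x_Fx_0$ in one side of the representing binomial by $x_{G_1}x_{G_2}$, strictly decreasing a complexity measure such as $\sum_F\max(0,\#\mathrm{supp}(F)-3)$ summed over $\cL\cup\cR$. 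Iterating until no bad flow remains yields a decomposition $x_0^na=b+a'$ with $b\in I'$ and $a'$ represented by multisets whose flows all have at most three nontrivial entries, namely trivial flows, pairs, or triples (in $\Z_3$ no flow can have exactly one nontrivial entry).

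The main obstacle I expect is arranging that the final multisets $\cL',\cR'$ contain no trivial flows: the quadric reductions consume copies of $x_0$ asymmetrically from the two sides, so without care the residual trivial-flow multiplicities on $\cL$ and $\cR$ may differ, and one cannot simply cancel them. I would handle this either by scheduling the reductions so that splits on $\cL$ and $\cR$ happen in balanced pairs (increasing $n$ and choosing among the available splittings for flexibility), or by appealing to cubic invariants special to $\Z_3$ -- for instance an identity expressing $x_0$ times two triples as a product of three pairs, available once the required configuration is arranged -- to convert a surplus $x_0$ on one side into pair/triple data. Once this bookkeeping is complete, the multisets $\cL',\cR'$ contain only pairs and triples, as required.
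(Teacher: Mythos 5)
Your proposal is correct and is essentially the paper's own argument: split any flow with four or more nontrivial entries as $F+0=G_1+G_2$ with $G_1$ a pair or triple (using that in $\Z_3$ either both nontrivial values occur, giving a pair, or all are equal and their number is a multiple of $3$), absorb the resulting quadric $x_Fx_0-x_{G_1}x_{G_2}$ into $I'$, and induct on the number of nontrivial entries. The bookkeeping worry about leftover trivial flows is one the paper also leaves implicit (its later arguments simply tolerate, and sometimes add, copies of the trivial flow on either side), so it does not affect the validity of your approach.
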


\begin{proof}
For any $\Z_3$-flow $L_1 = (f_1,\ldots,f_r)$ there are $k,m \leq r$ such that in $(f_k,\ldots,f_m)$ at least two and at most three entries are nonzero and $f_k+\ldots+f_m = 0$. We use a quadric relation involving the trivial flow $L_1 + 0 = L_1' + L_1''$ to divide $L_1$ into $L_1' = (0,\ldots,0,f_k,\ldots,f_m,0,\ldots,0)$ and $L_1'' = (f_1,\ldots,f_{k-1},0,\ldots,0,f_{m+1},\ldots,f_r)$.
 In terms of polynomials this gives us 
$$x_0(\prod x_{L_i} - \prod x_{R_j}) = \big[ (x_{L_1}x_0 - x_{L_1'}x_{L_1''})\prod_{i\neq 1} x_{L_i}\big] + \big[x_{L_1'}x_{L_1''}\prod_{i\neq 1} x_{L_i} - x_0\prod x_{R_i}\big].$$
That is, we have written the chosen element $a \in I$ multiplied by $x_0$ as a sum of two binomials. The first one is a multiple of a quadric, hence lies in $I'$, so to prove that the chosen element is in $I'$ it is sufficient to show that the second one is also in $I'$ (possibly after multiplying by a power of $x_0$).

Thus in terms of flows, instead of working with $\sum L_i + 0= \sum R_j + 0$ we can now consider the relation $$L_1' + L_1'' + \sum_{i\neq 1} L_i = \sum R_j + 0,$$
that is we have replaced $L_1 + 0$ with $L_1'+L_1''$, where $L_1'$ is already a pair or a triple. To finish, we use the induction on the size of $L_1$, i.e. the number of nontrivial group elements in this flow, and then apply the same argument for other flows in~$\cL$ and~$\cR$.
\end{proof}

\subsection{Basic step: using a relation and deleting a flow}\label{section_deletion}

\begin{notn}\label{replacing}
The operation of writing a binomial as a sum of an element of $I'$ and another binomial with a simpler structure is an elementary tool in the proof. We will refer to this operation as \emph{replacing a multiset of flows in $\cL$ or $\cR$ with a different multiset of flows}, or just as \emph{using the relation}.
\end{notn}

The idea of decomposing flows into simpler ones after adding the trivial flow works analogously for any finite abelian group, see~\cite[Lem.~7.2]{mateusz}, but in the case of $\Z_3$ (and other groups of small order) the set of possible basic configurations is relatively simple and allows us to proceed with a combinatorial argument.

\begin{notn}\label{notation_pairs_triples}
Let $g_1$, $g_2$ be the nontrivial elements of $\Z_3$. There is one type of pairs and two types of triples:
\begin{enumerate}
\item $g_1$ and $g_2$ assigned to two chosen indices $a$ and $b$ respectively -- will be denoted $(a,b)$;
\item $g_1$ assigned to three chosen indices $a$, $b$ and $c$ -- will be denoted $(a,b,c)_{g_1}$;
\item $g_2$ assigned to three chosen indices $a$, $b$ and $c$ -- will be denoted $(a,b,c)_{g_2}$.
\end{enumerate}
\end{notn}
Note that for a pair the order of $a$ and $b$ is meaningful, while for triples a permutation of $(a,b,c)$ does not change anything. Obviously, indices in a pair or a triple must be different.

Moreover, we show that one may assume that on one side there are only triples of one type.

\begin{lem}\label{different_triple_types}
If $\cR$ or $\cL$ contains both a $g_1$-triple $(a,b,c)_{g_1}$ and a $g_2$-triple $(x,y,z)_{g_2}$, we may replace them with pairs (possibly after adding a trivial flow to both sides of the relation).
\end{lem}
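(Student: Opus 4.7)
The plan is to exhibit, for each possible overlap pattern of the index sets $\{a,b,c\}$ and $\{x,y,z\}$, a single identity of multisets of flows of the form
\[
(a,b,c)_{g_1} + (x,y,z)_{g_2} + 0 \;=\; P_1 + P_2 + P_3,
\]
where each $P_i$ is a pair. Such an identity corresponds to a degree-$3$ binomial in $I$, which automatically lies in $I'$. Multiplying the original relation by $x_0$ (so that a trivial flow is added to both $\cL$ and $\cR$) and then using this identity as in Notation~\ref{replacing} substitutes the two triples together with the newly added $0$ in $\cL$ by the three pairs $P_1, P_2, P_3$, leaving the $0$ on the $\cR$ side. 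The case with the two triples in $\cR$ is completely symmetric.

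To produce the $P_i$'s I regard a pair $(u,v)$ as contributing $g_1$ at position $u$ and $g_2$ at position $v$. Choosing three pairs whose $g_1$-slots form the multiset $\{a,b,c\}$ and whose $g_2$-slots form $\{x,y,z\}$ is the same data as a bijection $\sigma\colon\{a,b,c\}\to\{x,y,z\}$, with $P_u=(u,\sigma(u))$. The only combinatorial requirement is that each such pair has distinct entries, i.e.\ that $\sigma$ has no fixed point on the overlap $\{a,b,c\}\cap\{x,y,z\}$. A direct count shows an admissible $\sigma$ always exists: for overlaps of size $0, 1, 2, 3$ there are respectively $6, 4, 3, 2$ valid bijections, and when $\{a,b,c\}=\{x,y,z\}$ one may take a $3$-cycle, giving e.g.\ $(a,b)+(b,c)+(c,a)$.

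Once $\sigma$ is chosen, the identity is verified position by position in $\Lambda$: at each index $p$ the left side contributes the vector $(3,0,0)$, $(2,1,0)$, $(2,0,1)$, or $(1,1,1)$ in the $(g_0,g_1,g_2)$-coordinates according to whether $p$ lies in neither, only $\{a,b,c\}$, only $\{x,y,z\}$, or both, and the three pairs produce the same values by construction, with the trivial flow on the left supplying the extra $g_0$-contributions at unshared positions. The whole argument is elementary---an easy bijection count followed by routine bookkeeping---and I foresee no genuine obstacle; the only conceptual point is that the added trivial flow is necessary because the binomial must be homogeneous, so two flows on one side must be matched with three on the other.
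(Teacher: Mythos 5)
Your proof is correct and follows essentially the same route as the paper: both rewrite $(a,b,c)_{g_1} + (x,y,z)_{g_2} + 0$ as a sum of three pairs via a cubic relation, after multiplying by $x_0$ if no trivial flow is available, and then invoke the replacement mechanism of Notation~\ref{replacing}. Your only addition is to make explicit, via the fixed-point-free bijection count, the paper's unproved assertion that a permutation yielding sensible pairs always exists, which is a welcome but minor elaboration of the same argument.
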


\begin{proof}
Without loss of generality assume that $L_1 = (a,b,c)_{g_1}$ and $L_2 = (x,y,z)_{g_2}$.
We use the cubic relation
$$(a,b,c)_{g_1} + (x,y,z)_{g_2} + 0 = (a,x) + (b,y) + (c,z).$$
This may require adding a trivial flow to both sides of the relation if there is no trivial flow in~$\cL$. Also, it may be necessary to permute entries of a triple so that we obtain sensible pairs (i.e. with different indices), but such a permutation always exists. 
Thus we may replace the left hand side of this cubic relation by its right hand side in $\cL$, as explained in Lemma~\ref{flow_types} and Notation~\ref{replacing}.
After a finite number of steps we obtain the relation which has only $g_1$-triples or only $g_2$-triples.
\end{proof}

\begin{example}\label{example_1}
We illustrate the reduction to triples and pairs, as in Lemma~\ref{flow_types}, with an example. Consider a phylogenetic invariant on $K_{1,5}$ given by (multi)sets of flows $\cL = \{L_1,\ldots,L_4\}$ and $\cR = \{R_1,\ldots,R_4\}$, where
\begin{center}
\begin{tabular}{ll}
$L_1 = (g_1,g_2,g_2,g_1,0)$, & $R_1 = (g_1,0,0,g_1,g_1) = (1,4,5)_{g_1}$, \\ 
$L_2 = (0,g_1,g_2,g_1,g_2)$, & $R_2 = (g_2,0,g_2,0,g_2) = (1,3,5)_{g_2}$, \\ 
$L_3 = (g_2,0,0,0,g_1) = (5,1)$, & $R_3 = (0,g_2,0,g_1,0) = (4,2)$,\\
$L_4 = (0,0,0,0,0) = 0$, & $R_4 = (0,g_1,g_2,0,0) = (2,3)$. \\
\end{tabular}
\end{center}
Where possible, we write flows (elements of $\Z_3^5$) already using Notation~\ref{notation_pairs_triples} for pairs and triples; leaves of $K_{1,5}$ are indexed by $\{1,\ldots,5\}$. One can check that this is indeed a phylogenetic invariant, since to each leaf of $K_{1,5}$ the same multiset of elements of $\Z_3$ is assigned by~$\cL$ and~$\cR$.

Let $x_0$ be the variable corresponding to the trivial flow. We have to prove that the element of~$I$ represented by $\cL$ and $\cR$ can be multiplied by $x_0^n$ for some $n \in \mathbb{N}$ such that the result belongs to $I'$, generated by elements of degree at most~3. In terms of flows, we show that after adding $n$ copies of the trivial flow to both $\cL$ and $\cR$ we may decompose the relation between flows in these multisets as a sum of relations having at most~3 flows on each side.

We need to use two relations in~$\cL$ to decompose $L_1$ and $L_2$:
\begin{align*}
L_1 + 0 &= (g_1,g_2,g_2,g_1,0) + 0 = (g_1,g_2,0,0,0) + (0,0,g_2,g_1,0) = (1,2) + (4,3),\\
L_2 + 0 &= (0,g_1,g_2,g_1,g_2) + 0 = (0,g_1,g_2,0,0) + (0,0,0,g_1,g_2) = (2,3) + (4,5).
\end{align*}

Since both require the trivial flow and $\cL$ contains just one, we add one trivial flow to both sides (thus $n\geq 1$). After these operations both sides contains $(2,3) = R_4$. We just eliminate it and in the next step we will consider disjoint multisets:
\begin{align*}
\cL' &\hbox{ containing } L_3 = (5,1), L_5 = (1,2), L_6 = (4,3), L_7 = (4,5),\\
\cR' &\hbox{ containing } R_1 = (1,4,5)_{g_1}, R_2 = (1,3,5)_{g_2}, R_3 = (4,2), R_5 = 0.
\end{align*}

In terms of polynomials, we start from $x_{L_1}x_{L_2}x_{L_3}x_0 - x_{R_1}x_{R_2}x_{R_3}x_{R_4}\in I$ and multiply it by $x_0$. The relations reducing $L_1$ and $L_2$ are quadric binomials
$$x_{L_1}x_0 - x_{L_5}x_{L_6} \hbox{ and } x_{L_2}x_0 - x_{L_7}x_{R_4}.$$
Thus we just need to prove that
\begin{multline*}
x_{L_1}x_{L_2}x_{L_3}x_0^2 - x_{R_1}x_{R_2}x_{R_3}x_{R_4}x_0 - x_{L_2}x_{L_3}x_0(x_{L_1}x_0 - x_{L_5}x_{L_6})-\\ - x_{L_3}x_{L_5}x_{L_6}(x_{L_2}x_0 - x_{L_7}x_{R_4}) = 
x_{R_4}(x_{L_3}x_{L_5}x_{L_6}x_{L_7} - x_{R_1}x_{R_2}x_{R_3}x_0)
\end{multline*}
is in~$I'$, and in fact this is true for $x_{L_3}x_{L_5}x_{L_6}x_{L_7} - x_{R_1}x_{R_2}x_{R_3}x_0$, i.e. we can forget~$x_{R_4}$.
\end{example}

\begin{example}\label{example_2}
We continue the previous example to  present the elimination of $g_1$-triples and $g_2$-triples appearing on one side, as in Lemma~\ref{different_triple_types}. In the relation between $\cL'$ and $\cR'$ there is a $g_1$-triple and a $g_2$-triple in $\cR'$. We use the relation
$$R_1 + R_2 + R_5 = (1,4,5)_{g_1} + (1,3,5)_{g_2} + 0 = (1,3) + (4,5) + (5,1)$$
(which does not require adding the trivial flow to the relation). Note that now pairs $L_7 = (4,5)$ and $L_3 = (5,1)$ are on both sides of the relation, hence we may eliminate them and consider only $\cL''$ containing $L_5 = (1,2), L_6 = (4,3)$ and  $\cR''$ containing  $R_3 = (4,2), R_6 = (1,3)$. But $\cL''$ and $\cR''$ already represent a quadric relation 
$$(4,2) + (1,3) = (1,2) + (4,3).$$
Thus we obtain that the relation from Example~\ref{example_1} multiplied by $x_0$ can be decomposed into a sum of multiples of quadrics and cubics, hence actually represents an element of~$I'$.

In terms of polynomials, the cubic binomial corresponding to changing triples into pairs in $\cR$ is
$$x_{R_1}x_{R_2}x_0 - x_{R_6}x_{L_3}x_{L_7}.$$
Hence we would like to prove that the following polynomial is in $I'$:
\begin{multline*}
x_{L_3}x_{L_5}x_{L_6}x_{L_7} - x_{R_1}x_{R_2}x_{R_3}x_0 + x_{R_3}(x_{R_1}x_{R_2}x_0 - x_{R_6}x_{L_3}x_{L_7}) =\\
=  x_{L_3}x_{L_5}x_{L_6}x_{L_7} - x_{R_3}x_{R_6}x_{L_3}x_{L_7} = x_{L_3}x_{L_7}(x_{L_5}x_{L_6} - x_{R_3}x_{R_6}),
\end{multline*}
but it is clearly a multiple of a quadric binomial.
\end{example}

We are going to prove that any binomial in~$I$, represented by a relation between multisets of flows~$\cL$ and~$\cR$, can be written as a sum of multiples of quadric and cubic binomials, which belong to~$I'$. This will be done by simplifying inductively the relation between $\cL$ and $\cR$ using relations between multisets of flows of cardinality at most~3.

As one can already see in the examples above, the aim of using a relation on multisets of flows~$\cL$ and~$\cR$, as in Notation~\ref{replacing}, is to obtain $\cL'$ and $\cR'$ with non-empty intersection. Then the flow which appears on both sides of the relation can be deleted, which allows us to use induction. Let us explain this operation more carefully.

\begin{notn}\label{notation_flow_reduction}
By \emph{deleting a flow} $F = L_1 = R_1$, which belongs both to $\cL$ and $\cR$, we mean that instead of the relation $F + \sum_{i \neq 1} L_i = F + \sum_{j\neq 1} R_j$ we consider the relation $\sum_{i \neq 1} L_i = \sum_{j\neq 1} R_j$. In terms of polynomials this is equivalent to saying that to prove that the binomial $x_F(\prod_{i\neq 1}x_{L_i} - \prod_{j\neq 1}x_{R_j})$ is in $I'$ it suffices to show that $\prod_{i\neq 1}x_{L_i} - \prod_{j\neq 1}x_{R_j} \in I'$.
By such a deletion we obtain a relation of smaller degree with respect to the grading with will be used in the induction.
\end{notn}

We show the decomposition of an element of~$I$ into multiples of quadrics and cubics using relations and deletions of flows in a simple example.

\begin{example}\label{example_2}
We consider a relation already in the form of multisets of triples and pairs, without different types of triples on one side. Several steps of reduction are shown, this time only in the notation of group-based flows. Take again $K_{1,5}$ and a relation
\begin{center}
\begin{tabular}{llllll}
$\cL\colon\quad$ & $L_1 = (1,2,3)_{g_1}$, & $L_2 = (4,2)$, & $L_3 = (1,5)$, & $L_4 = (5,3)$, & $L_5 = (4,3)$, \\ 
$\cR\colon\quad$ & $R_1 = (3,4,5)_{g_1}$, & $R_2 = (1,2)$, & $R_3 = (1,3)$, & $R_4 = (4,5)$, & $R_5 = (2,3)$. \\
\end{tabular}
\end{center}
It belongs to the most general case described in section~\ref{section_four_indices}. We start from using a quadric relation in $\cR$:
$$R_4 + R_5 = (4,5) + (2,3) = (4,3) + (2,5) = L_5 + (2,5).$$
In this way we obtain $L_5 = (4,3)$ on the right side of the relation, which allows us to reduce the problem to considering $\cL' = \{L_1,L_2,L_3,L_4\}$ and $\cR' = \{R_1,R_2,R_3,(2,5)\}$. Now we can again use a relation in $\cR$:
$$R_3 + (2,5) = (1,3) + (2,5) = (1,5) + (2,3) = L_3 + (2,3),$$
hence $L_3$ can be deleted and we are left with $\cL''=\{L_1,L_2,L_4\}$ and $\cR'' = \{R_1,R_2,(2,3)\}$. These sets represent a cubic relation, i.e. we have decomposed the chosen element of~$I$ as a sum of multiples of two quadrics and a cubic. However, we may do one more step to see that this is in fact a sum of multiples of four quadrics. We use a relation involving a triple in $\cL$:
$$L_1 + L_2 = (1,2,3)_{g_1} + (4,2) = (2,3,4)_{g_1} + (1,2) = (2,3,4)_{g_1} + R_2.$$
Now we can delete $R_2=(1,2)$ and obtain a quadric relation
$$(2,3,4)_{g_1} + (5,3) = (3,4,5)_{g_1} + (2,3).$$
\end{example}

\subsection{Outline of the argument}\label{subsection_outline}
The general idea of the argument is induction, but not on the standard degree, because we need to multiply the relation by the variable corresponding to the trivial flow in Lemmata~\ref{flow_types} and~\ref{different_triple_types}. We use the grading by the size of the flow: a variable is in the $d$-th graded piece if the corresponding flow has $d$ nontrivial entries. We will take a relation and decompose it into a sum of relations in $I'$, i.e. relations of (standard) degree 2 or 3 multiplied by monomials, and relations of smaller degree with respect to the grading introduced above. Before starting the induction we change the relation such that there are no $g_1$-triples and $g_2$-triples together on one side, by Lemma~\ref{different_triple_types}. We will always assume that this is satisfied and ensure that our modifications do not violate this condition. 

In the following sections we consider several separate cases depending on possible configurations of elements in~$\cL$ and $\cR$. We start from analyzing the situation where there are no pairs at all in the relation, see section~\ref{section_no_pairs}. In the next two sections we assume that $\cL$ or $\cR$ contains at least two different pairs, e.g. $(1,2)$ and $(a,b)$. There are a few possibilities. In section~\ref{section_four_indices} these pairs consist of four different indices. Section~\ref{section_three_indices} concerns the case where these pairs has only three different indices, i.e. the set $\{1,2,a,b\}$ has three elements. It is divided into two subsections based on the configuration of indices: in~\ref{subsection_three_indices_1} we look at the case $1=a$ and in~\ref{subsection_three_indices_2} at the case $1=b$ (these are all possibilities up to a permutation). In the second one we either reduce the relation or prove that in fact we are in the situation of~\ref{subsection_three_indices_1}. The last possibility, described in section~\ref{section_two_indices}, is that only two different indices can appear in pairs on each side of the relation.

\section{No pairs}\label{section_no_pairs}
First assume that $\cL$ and $\cR$ consist only of triples. By Lemma~\ref{different_triple_types}, without loss of generality we may assume that there are only $g_1$-triples (we will use the symmetry between $g_1$ and $g_2$, and also between $\cL$ and $\cR$, all the time).

It is worth noting that the fact that in this case the relations can be generated in degree 2 is a consequence of a result on uniform (or, much more generally, strongly base orderable) matroids. This is a special case of the White's conjecture, for the details see~\cite{matroids} and references therein.

We consider two subcases, described in sections~\ref{nopairs_2ind} and~\ref{nopairs_no2ind}, depending on whether we can find triples on different sides of the relation which differ just by one index.

\subsection{There are $g_1$-triples, one in~$\cL$ and one in~$\cR$, with two indices in common}\label{nopairs_2ind} Say $(1,2,3)_{g_1} \in \cL$ and $(1,2,a)_{g_1}\in \cR$ -- we rename the indices if necessary, as we will do often throughout the proof. Then there must be a triple $(3,b,c)_{g_1}\in \cR$ for some indices $b,c$, because $g_1$ appears in~$\cL$ at the index~$3$ at least once. If $a\neq b$ and $a\neq c$, then we can use the relation $$(1,2,3)_{g_1} + (a,b,c)_{g_1}= (1,2,a)_{g_1} + (3,b,c)_{g_1}$$ in~$\cR$ and delete the flow $(1,2,3)_{g_1}$, as explained in Notation~\ref{replacing} and Notation~\ref{notation_flow_reduction}.

Thus we only have to consider the situation where $a \in \{b,c\}$. Without loss of generality $a=c$, so $(3,a,b)_{g_1}\in \cR$ for some $b$. Moreover, we may assume that 3 can appear in $\cR$ only in $g_1$-triples containing also $a$ -- otherwise we could find a triple which might be used in the relation above.

Symmetrically, if $g_1$ appears on the index $a$ in $\cL$ in a triple $(a,d,e)_{g_1}$ then either we can use the relation 
\begin{equation}\label{rel_counting_arg}
(1,2,3)_{g_1} + (a,d,e)_{g_1}= (1,2,a)_{g_1} + (3,d,e)_{g_1}
\end{equation}
in $\cL$ and delete $(1,2,a)_{g_1}$ or we have $3 \in \{d,e\}$. We prove by contradiction that it is impossible that for every triple $(a,d,e)_{g_1}$in $\cL$ we have $3 \in \{d,e\}$. Hence assume that~$a$ appears in $\cL$ only in $g_1$-triples containing~3.

We finish with an argument which will be repeated frequently throughout the proof.
Assume that there are precisely $n$ $g_1$-triples containing $3$ in $\cR$. We have shown above that in considered situation all of them must contain~$a$. Then $a$ appears at least $n$ times with $g_1$ in $\cR$ (in all~$n$ triples with~3, but maybe also in other flows). Since~$\cL$ and~$\cR$ are two sides of a relation, $a$ appears also at least $n$ times with~$g_1$ in $\cL$. But this implies that~3 appears at least $n+1$ times with $g_1$ in $\cL$: once in the triple $(1,2,3)_{g_1}$ not containing $a$, and $n$ times in a triple containing~$a$. This is a contradiction, since the numbers of appearances of $g_1$ on a chosen index on both sides must be equal.

Thus we conclude that it is impossible that $a$ appears only in $g_1$-triples containing~3 in $\cL$. That is, there is a triple $(a,d,e)_{g_1}$ in $\cL$ such that $3 \notin \{d,e\}$, so we can use the relation~\eqref{rel_counting_arg} to obtain a deletion of flows, as explained in Notation~\ref{notation_flow_reduction}.

Let us reformulate this argument in a form of a general observation, so we can refer to it later~on.

\begin{lem}\label{counting_argument}
Let $g_k$ and $g_m$ be non-trivial elements of~$\Z_3$ (not necessarily different), and $\alpha, \beta \in \{1,\ldots,r\}$ be indices of leaves of~$K_{1,r}$, where $\alpha\neq \beta$. Assume that $\cL$ and $\cR$ consist only of pairs and triples, and 
\begin{itemize}
\item if $\alpha$ appears in $\cL$ in a flow with $g_m$, then this flow contains also $\beta$ with $g_k$,
\item if $\beta$ appears in $\cR$ in a flow with $g_k$, then this flow contains also $\alpha$ with $g_m$.
\end{itemize}
Then $\beta$ cannot appear in $\cL$ with $g_k$ in a flow which does not contain $\alpha$ with $g_m$. 

Obviously, $\cL$ and $\cR$ may be swapped if needed.
\end{lem}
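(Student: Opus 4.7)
The plan is to mirror the counting argument given at the end of subsection~\ref{nopairs_2ind}, now formulated abstractly for arbitrary non-trivial $g_k, g_m \in \Z_3$ and distinct indices $\alpha, \beta \in \{1,\ldots,r\}$. The proof proceeds by contradiction.

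Suppose that some flow in $\cL$ contains $\beta$ assigned $g_k$ but does not contain $\alpha$ assigned $g_m$. Let $n$ denote the number of flows in $\cL$ in which $\alpha$ is assigned $g_m$. The first hypothesis forces each of these $n$ flows to also carry $\beta$ with $g_k$; together with the extra flow guaranteed by the contradiction hypothesis, this yields at least $n+1$ flows in the multiset $\cL$ in which $\beta$ is assigned $g_k$.

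Since $\cL$ and $\cR$ are the two sides of a single phylogenetic relation, the multiset of group elements placed on each fixed leaf index must agree on both sides. Applied at index $\beta$, we obtain at least $n+1$ flows of $\cR$ in which $\beta$ is assigned $g_k$. By the second hypothesis, every such flow also contains $\alpha$ with $g_m$, so $\alpha$ is assigned $g_m$ in at least $n+1$ flows of $\cR$. Matching multisets once more at index $\alpha$ produces at least $n+1$ flows of $\cL$ in which $\alpha$ is assigned $g_m$, contradicting the definition of $n$. The symmetric version (with $\cL$ and $\cR$ interchanged) follows by the same reasoning applied to the swapped data.

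No real obstacle is anticipated: the argument is a one-step pigeonhole that uses nothing beyond the two hypotheses and the index-wise balance inherent in any phylogenetic relation. The restriction that $\cL$ and $\cR$ contain only pairs and triples does not enter the count itself; it is simply the standing assumption under which Lemmata~\ref{flow_types} and~\ref{different_triple_types} allow us to invoke the statement in subsequent sections.
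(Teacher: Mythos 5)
Your proof is correct and is essentially the paper's own argument: the same pigeonhole count using the two hypotheses and the index-wise balance of the multisets $\cL$ and $\cR$, only anchored differently (you count the flows of $\cL$ containing $\alpha$ with $g_m$, while the paper counts the flows of $\cR$ containing $\beta$ with $g_k$), which changes nothing of substance.
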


\begin{proof}
The idea is to count appearances of $\alpha$ with $g_m$ and $\beta$ with $g_k$, exactly as in the example above -- let us explain the details. 

Assume that there is a flow in $\cL$ which contains $\beta$ with $g_k$, but not $\alpha$ with $g_m$. By $n$ we denote the number of configurations containing $\beta$ with $g_k$ in $\cR$. By assumptions of the lemma every such a configuration contains also $\alpha$ with $g_m$. Hence $\alpha$ appears in~$\cR$ with~$g_m$ at least $n$ times. Since the sums of multisets of flows $\cR$ and $\cL$ and $\cR$ are equal, there are also at least $n$ occurrences of $\alpha$ with $g_m$ in $\cL$. But, by assumptions of the lemma, this implies that $\beta$ appears at least $n+1$ times with $g_k$ in $\cL$: $n$ times with $\alpha$ with $g_m$ and also in a flow which does not contain $\alpha$ with $g_m$. This contradicts the fact that $\cL$ and $\cR$ are multisets of flows with equal sums. Hence, under the assumptions of the lemma, $\cL$ cannot contain any configuration containing $\beta$ with $g_k$ but not containing $\alpha$ with $g_m$.

For example, in the argument given before the lemma we have $\alpha = a$, $\beta = 3$ and $k=m=1$. We consider the situation where~$a$ appears with~$g_1$ in~$\cL$ only in triples containing~3, and~3 appears with $g_1$ in $\cR$ only in triples containing~$a$. We get a contradiction with the fact that the triple $(1,2,3)_{g_1}$, containing~3 but not containing~$a$, is in~$\cL$, thus showing that such a situation is impossible.
\end{proof}

\subsection{There are no two triples, one in~$\cL$ and one in~$\cR$, with two indices in common}\label{nopairs_no2ind} 
Take $(1,2,3)_{g_1} \in \cL$ (if necessary, we rename the indices). Then there is some $(1,a,b)_{g_1}\in \cR$, because~1 must appear also in~$\cR$ with $g_1$. If there is $(2,c,d)_{g_1}\in \cR$ such that $\{a,b\}\neq\{c,d\}$, then we can
find a relation which gives a reduction to the previous case~\ref{nopairs_2ind}. For example, if $a\notin \{c,d\}$, then the relation used is $(1,a,b)_{g_1} + (2,c,d)_{g_1} = (1,2,b)_{g_1} + (a,c,d)_{g_1}$, and we obtain the triple $(1,2,b)_{g_1}\in \cR$, which has two indices in common with $(1,2,3)_{g_1} \in \cL$. Hence we may assume that 2 with $g_1$ appears always in the triple $(2,a,b)_{g_1}$ in $\cR$. Using the same argument we prove that 3 with $g_1$ appears always in the triple $(3,a,b)_{g_1}$ in~$\cR$.

Note that since $(1,a,b)_{g_1}$ is in~$\cR$, then $a$ and $b$ must appear in $g_1$-triples in $\cL$. If there is a triple $(a,e,f)_{g_1} \in \cL$ and $\{e,f\} \neq \{2,3\}$, then again we can find a quadric relation which leads to the previous case~\ref{nopairs_2ind}. Hence we may assume that $a$ appears with $g_1$ in $\cL$ always in $(a,2,3)_{g_1}$ and $b$ in $(b,2,3)_{g_1}$. We finish in a very similar way as in Lemma~\ref{counting_argument}. Let $n$ be the number of $g_1$-triples containing index 2 or 3 in~$\cR$. Since in considered situation every such a triple contains also $a$ and $b$, $\cR$ contains $n$ $g_1$-triples with $a$ and $b$. But we have assumed that $a$ and $b$ appear in $\cL$ only in triples containing also 2 and 3. Hence 2 (and 3 also) appears in $\cL$ with $g_1$ at least $2n$ times: $n$ times with $a$ and $n$ times with $b$. This is a contradiction with the fact that~2 appears in~$\cR$ at most $n$ times. 

Summing up, the only possibility in this case is that a relation can be found which gives a reduction to the previous one,~\ref{nopairs_2ind}.

\section{At least two pairs on one side, four different indices}\label{section_four_indices}

Now assume that there exist pairs $(1,2)$ and $(a,b)$ in $\cL$ such that $1, 2, a, b$ are pairwise different indices (we swap $\cL$ with $\cR$ if necessary).

\begin{rem}\label{exchanged_pairs}
If two pairs $(\alpha,\beta)$ and $(\gamma,\delta)$ with all indices different occur on one side of the relation, then we may swap their elements, that is replace them with $(\alpha,\delta)$ and $(\gamma,\beta)$ by applying quadric relation $(\alpha,\beta)+(\gamma,\delta) = (\alpha,\delta)+(\gamma,\beta)$.
\end{rem}

There are three cases to consider (up to swapping $g_1$ and $g_2$):
\begin{enumerate}
\item[(4.1)] $\cR$ contains triples $(1,x,y)_{g_1}$ and $(a,s,t)_{g_1}$ for some indices $x,y,s,t$ such that $x\neq y$ and $s\neq t$ (thus, by Lemma~\ref{different_triple_types}, neither $2$ nor $b$ is contained in a $g_2$-triple in $\cR$);
\item[(4.2)] 1 is contained in a $g_1$-triple in $\cR$ and $a$ does not appear in any $g_1$-triple in $\cR$ (thus, by Lemma~\ref{different_triple_types}, neither 2 nor $b$ appears in a $g_2$ triple in $\cR$);
\item[(4.3)] there are no $g_1$-triples containing 1 or $a$ and no $g_2$-triples containing 2 or $b$ in $\cR$.
\end{enumerate}

\subsection{There are $(1,x,y)_{g_1}$, $(a,s,t)_{g_1}$, $(u,2)$ and $(v,b)$ in $\cR$} Such two pairs must be in~$\cR$ because 2 and $b$ cannot occur in $g_2$-triples. If $u \notin \{x,y\}$ or $v\notin \{s,t\}$ then we could use a quadric relation, e.g. $(1,x,y)_{g_1}+(u,2) = (u,x,y)_{g_1} + (1,2)$, to get $(1,2)$ or $(a,b)$ in $\cR$ and delete a pair. Hence, without loss of generality, we consider only the situation where $u=x$ and $v=s$, that is, $\cR$ contains $(x,2)$ and $(s,b)$. We see that $x$ must appear with~$g_1$ in~$\cL$.

\subsubsection{} First assume that $x$ occurs with $g_1$ in $\cL$ only in triples and take such a triple $(x,c,d)_{g_1} \in \cL$. Then either we can use the relation $(1,2) + (x,c,d)_{g_1} = (x,2)+ (1,c,d)_{g_1}$ and delete $(x,2)$ or we may assume that $c=1$ and $x$ appears with $g_1$ in $\cL$ always in a triple containing~1. In the latter case~1 appears at least twice in $\cL$ with $g_1$. Hence the triple $(1,x,y)_{g_1}$ cannot be the only element of $\cR$ containing~1 with~$g_1$.

Assume that 1 appears with $g_1$ in a pair $(1,e) \in \cR$. If $x \neq e$ then we apply the quadric relation $(x,2) + (1,e) = (x,e) + (1,2)$ in $\cR$ and delete $(1,2)$. And if $s \neq e$, we can use the relation $(s,b) + (1,e) = (1,b) + (s,e)$ in $\cR$ and delete $(1,b)$ after swapping elements of pairs in $\cL$ as in Remark~\ref{exchanged_pairs}. Thus we can work under assumption that $x=s=e$, which means that $(a,x,t)_{g_1} \in \cR$. But then we may use the relation $(1,x) + (a,x,t)_{g_1} = (t,x)+ (a,x,1)_{g_1}$, because $t \neq x$ and $1\neq x$, since they appear together in some flows. Thus we may assume that $(a,x,1)_{g_1} \in \cR$. And in $\cL$ either we use the relation $(a,b) + (x,1,d)_{g_1} = (x,b) + (a,1,d)_{g_1}$, which allows us to delete $(x,b)$, or $a=d$ and $(x,1,a)_{g_1}$ can be deleted.

If 1 appears with $g_1$ in $\cR$ only in triples, we have $(1,e,f)_{g_1} \in \cR$. Then either we can use the relation $(x,2) + (1,e,f)_{g_1} = (1,2)+ (x,e,f)_{g_1}$ and delete $(1,2)$ or we may assume that $e=x$. That is, $1$ always appears in $\cR$ in triples containing also~$x$. To finish, we apply Lemma~\ref{counting_argument}: $x$ appears always in a $g_1$-triple containing 1 in $\cL$ and 1 in a $g_1$-triple containing $x$ in $\cR$, but also $(1,2) \in \cL$. Hence if we compare numbers of occurrences of~$1$ on both sides, we get a contradiction.

Thus in this case we are always able to use relations leading to a situation in which a pair or a triple can be deleted, so that the degree of the relation in considered grading is decreased.

\subsubsection{} The second possibility is that $x$ appears with $g_1$ in $\cL$ in a pair $(x,c)$. If $c \neq 1$ then we use the relation $(x,c) + (1,2) = (x,2) + (1,c)$ and delete $(x,2)$, so we may assume that $(x,1) \in \cL$. Then 1 appears in $\cR$ with $g_2$, and it can be only in pairs by Lemma~\ref{different_triple_types}, since there are already some $g_1$-triples. Then $(e,1) \in \cR$, and in fact we may assume that this pair is equal to $(2,1)$, because otherwise we would use the relation $(e,1) + (x,2) = (x,1)+(e,2)$ and delete $(x,1)$.

Now we try to apply the cubic relation $$(x,2)+(s,b)+(2,1) = (x,1)+(2,b)+(s,2)$$ in $\cR$ and delete $(x,1)$. This fails only if $s=2$. Then, in particular, $(2,b) \in \cR$. In this case 2 must appear with~$g_1$ in~$\cL$. If a pair $(2,h) \in \cL$ then we try to swap elements with either $(x,1)$ or $(a,b)$ (as described in Remark~\ref{exchanged_pairs}) and delete $(2,1)$ or $(2,b)$. This fails only if $x=h=a$, but then $(a,2) \in \cR$ and the same pair can be obtained in $\cL$ by swapping elements of $(1,2)$ and $(a,b)$.
And if there is a triple $(2,i,j)_{g_1} \in \cL$, then we try to use one of the relations
$$(2,i,j)_{g_1} + (a,b) = (a,i,j)_{g_1}+(2,b) \qquad (2,i,j)_{g_1} + (x,1) = (x,i,j)_{g_1}+(2,1)$$ and delete $(2,b)$ or $(2,1)$.

This is impossible only if $\{i,j\}\supseteq\{x,a\}$. If $x=a$ then we swap elements of $(1,2)$ and $(a,b)$ in $\cL$ and delete $(a,2)$. Hence we may assume that $x\neq a$ and there is $(2,x,a)_{g_1} \in \cL$. In $\cR$ we use the relation $(a,2,t)_{g_1}+(x,2) = (2,x,a)_{g_1}+(t,2)$ (note that $t \neq 2$ since they appear in the same triple), which allows us to delete $(2,x,a)_{g_1}$.

Thus again, in this case we can find relations which lead to a reduction of a flow.

\subsection{Now 1 is contained in some $g_1$-triple in $\cR$, say $(1,x,y)_{g_1}$, $a$ does not appear in any $g_1$-triple and 2 and $b$ do not appear in any $g_2$-triple in~$\cR$} Then $a$ appears with $g_1$ and 2 and $b$ with $g_2$ in $\cR$ only in pairs. Let $(z,2)$ be such a pair for the index 2 with $g_2$. Then we may assume that $z \in \{x,y\}$, because otherwise we could use the relation $(1,x,y)_{g_1} +(z,2) = (z,x,y)_{g_1} + (1,2)$ and delete $(1,2)$; let $z=x$. Take now $(a,u)$ and $(v,b)$. We consider only the situation with $u=v$, because otherwise the relation $(a,u)+(v,b) = (a,b) + (v,u)$ allows us to delete $(a,b)$. Moreover, we may assume that $u=x$, because otherwise the relations $(a,u)+(x,2) = (a,2)+(x,u)$ in $\cR$ and $(1,2)+(a,b)=(1,b)+(a,2)$ in $\cL$ would give a deletion of the pair $(a,2)$. Summing up, we consider the case where $\cR$ contains $(1,x,y)_{g_1}$, $(x,2)$, $(a,x)$ and $(x,b)$.

Let us determine in what flows $x$ appears with $g_1$ in $\cL$. If in a pair $(x,c)$ then we can use one of the relations $$(x,c)+(1,2) = (x,2)+(1,c) \qquad \qquad (x,c)+(a,b) = (x,b)+(a,c)$$ and delete $(x,2)$ or $(x,b)$. Hence we may assume that $x$ appears with $g_1$ in $\cL$ only in triples. If $(x,c,d)_{g_1}$ is such a triple then we try to use the relations
$$(x,c,d)_{g_1}+(1,2) = (x,2)+(1,c,d)_{g_1} \qquad (x,c,d)_{g_1} + (a,b) = (x,b) + (a,c,d)_{g_1}$$ and delete the same pairs as before. This fails only if $\{c,d\} = \{1,a\}$, so we may assume that $x$ appears with $g_1$ in $\cL$ only in triples $(x,1,a)_{g_1}$. In particular, there is at least one such triple. But then we can use the relation $(1,x,y)_{g_1} + (a,x) = (1,x,a)_{g_1}+(y,x)$ in $\cR$, which works because $x\neq y$ as they appear in the same triple, and delete $(x,1,a)_{g_1}$.

Hence in this case we can always find a deletion of a flow, after using suitable relations.

\subsection{Neither 1 nor $a$ appears in a $g_1$-triple and neither 2 nor $b$ is in a $g_2$-triple in $\cR$} Then there are pairs $(1,x)$, $(x',2)$, $(a,y)$ and $(y',b)$ in $\cR$. We may assume that $x=x'$ and $y=y'$, because otherwise we could use a relation of degree 2 to produce a pair $(1,2)$ or $(a,b)$ and cancel it with equal pair in $\cL$. Then, if $x\neq y$, we could use the relation $(1,x)+(y,b) = (1,b)+(x,y)$ in $\cR$ and the relation $(1,2)+(a,b) = (1,b)+(2,a)$ in $\cL$ and delete $(1,b)$. Hence we assume that~$\cR$ contains pairs $(1,x)$, $(x,2)$, $(a,x)$ and $(x,b)$.

Let us check in what flows~$x$ appears in~$\cL$. If it appears in a pair $(x,c)$ then we could use one of the relations $$(x,c) + (1,2) = (x,2) + (1,c) \qquad \qquad (x,c)+(a,b) = (x,b)+(a,c)$$ and delete either $(x,2)$ or $(x,b)$. Thus we may assume that $x$ does not appear with $g_1$ in any pair in $\cL$ and also, arguing in the same way, that~$x$ does not appear with~$g_2$ in any pair in $\cL$. This means that $\cL$ must contain a $g_1$-triple and a $g_2$-triple with~$x$, but by Lemma~\ref{different_triple_types} such a situation cannot happen. Hence this case must end with a deletion of a pair.

\section{At least two pairs on one side, three different indices}\label{section_three_indices}

Now we assume there are no two pairs on one side of the relation consisting of four pairwise different indices, but there are two pairs such that the set of their indices has three elements. Let 1, 2 and $a$ be pairwise different indices. There are two cases (up to swapping~$g_1$ with~$g_2$ and~$\cL$ with~$\cR$, and renaming indices):
\begin{enumerate}
\item[(5.1)] $\cL$ contains pairs $(1,2)$, $(1,a)$,
\item[(5.2)] $\cL$ contains pairs $(1,2)$, $(a,1)$.
\end{enumerate}

\subsection{Case $(1,2)$, $(1,a)$}\label{subsection_three_indices_1}

There are three different possibilities to consider:
\begin{enumerate}
\item[(5.1.1)] 2 or $a$ appears in $g_2$-triples in $\cR$ (thus, by Lemma~\ref{different_triple_types}, 1 is not contained in a $g_1$-triple in $\cR$);
\item[(5.1.2)] 1 appears in a $g_1$-triple in $\cR$ (thus, by Lemma~\ref{different_triple_types}, neither $2$ nor $a$ is contained in a $g_2$-triple in $\cR$);
\item[(5.1.3)] 1 does not appear in any $g_1$-triple in $\cR$ and neither 2 nor $a$ appears in a $g_2$-triple in $\cR$.
\end{enumerate}

\subsubsection{Assume that $\cR$ contains a triple $(2,x,y)_{g_2}$ and a pair $(1,t)$} Then we try to use the relation $(2,x,y)_{g_2}+(1,t) = (t,x,y)_{g_2}+(1,2)$ and delete $(1,2)$. Consider the situation when it is impossible: let $x=t$ and $(1,x) \in \cR$. We may assume $x \neq a$, otherwise $\cL$ and $\cR$ are not disjoint. Now consider flows with $x$ in $\cL$. If $x$ appears with $g_2$ in $\cL$ in a pair $(c,x)$ then we may swap elements in some two pairs as in Remark~\ref{exchanged_pairs} and delete $(1,x)$, because either $c \neq 2$ or $c \neq a$. Hence we may assume that $x$ appears with $g_2$ in $\cL$ just in triples.

Let $(x,c,d)_{g_2} \in \cL$. Then either we use one of the relations
$$(x,c,d)_{g_2} + (1,2) = (2,c,d)_{g_2}+(1,x) \qquad (x,c,d)_{g_2} + (1,a) = (a,c,d)_{g_2}+(1,x)$$
and delete $(1,x)$ or every $g_2$-triple in $\cL$ containing $x$ is of the form $(x,a,2)_{g_2}$. Now we check in what configurations 2 can occur in $\cR$ with $g_2$. If it occurs only in triples, then for $(2,e,f)_{g_2} \in \cR$ we try to use the relation $(2,e,f)_{g_2}+(1,x) = (x,e,f)_{g_2}+(1,2)$ and delete $(1,2)$. It is impossible only in the case where all $g_2$-triples in $\cR$ containing 2 contain also $x$, but then we finish the argument by applying Lemma~\ref{counting_argument}: since $x$ appears in $\cL$ with $g_2$ only in triples containing 2, and 2 appears in $\cR$ in $g_2$ only in triples containing~$x$, then we get a contradiction with the fact that $(1,2) \in \cL$.

This leaves us in the situation where there is a pair $(e,2)$ in~$\cR$. Then we may assume that $e=x$, because otherwise we swap elements in pairs with $(1,x)$ in~$\cR$ and delete $(1,2)$. But then $x$ must occur with $g_1$ in~$\cL$ and it has to be in a pair since we already have $g_2$-triples there. If $(x,f)\in \cL$ and $f \neq 1$ then we swap indices with $(1,2)$ and delete $(x,2)$. And if $(x,1) \in \cL$ then we use the relation $(x,a,2)_{g_2}+(x,1) = (x,a,1)_{g_2} + (x,2)$ in~$\cL$, hence $(x,2)$ can also be deleted.

Thus in this case we always get a pair to delete after using some relations.

\subsubsection{In $\cR$ there are $(1,x,y)_{g_1}$, $(z,2)$ and $(t,a)$} We may assume that $z=x$ (or $z=y$, but then we just swap $x$ with $y$), because if not then we could use the relation $(1,x,y)_{g_1}+(z,2) = (z,x,y)_{g_1} + (1,2)$ and delete $(1,2)$. (In the same way we see that $t \in \{x,y\}$, we use it later.)

First assume that both $x$ and $t$ occur in $\cL$ with $g_1$ in a pair, i.e. $(x,c), (t,d) \in \cL$ for some~$c,d$. Then we can swap elements in pairs as in Remark~\ref{exchanged_pairs} (either $(x,c)$ and $(1,2)$ or $(t,d)$ and $(1,a)$) and delete a pair, unless $c=d=1$. In this case we must have $(e,1) \in \cR$, because 1 has to appear with $g_2$ in~$\cR$, but there cannot be any $g_2$-triple by Lemma~\ref{different_triple_types}. This allows us to swap elements in pairs in $\cR$ and delete $(x,1)$ or $(t,1)$, since either $e\neq 2$ or $e \neq a$.

Hence the only possibility is that $x$ or $t$ occurs in $\cL$ with $g_1$ only in triples. Let~$x$ have this property -- the situation is symmetric since $t\in \{x,y\}$.
If $(x,f,h)_{g_1} \in \cL$ then we try to use the relation $(x,f,h)_{g_1}+(1,2) = (1,f,h)_{g_1}+(x,2)$ and delete $(x,2)$. This fails only if every $g_1$-triple in $\cL$ containing $x$ contains also 1. Consider possible flows with $g_1$ at index 1 in $\cR$.

If all of them are triples then we may assume that they contain $x$, because otherwise the relation $(1,i,j)_{g_1}+(x,2) = (x,i,j)_{g_1}+(1,2)$ can be applied and we have a deletion of $(1,2)$. Such a situation is impossible by Lemma~\ref{counting_argument}: if $x$ appears with $g_1$ in $\cL$ always in triples containing 1, and 1 appears with $g_1$ in $\cR$ only in triples containing $x$, then we get a contradiction with the fact that $(1,2) \in \cL$.

We are left with the case where 1 appears with $g_1$ in $\cR$ in a pair $(1,k)$. We try to swap elements in pairs in $\cR$ with $(x,2)$ or $(t,a)$ and delete $(1,2)$ or $(1,a)$. This fails only if $k=x=t$. But then we have $(1,x) \in \cR$, so~$x$ must occur in~$\cL$ with~$g_2$. It can be only in a pair because there is already a $g_1$-triple. Hence take $(m,x) \in \cL$. Now we can swap elements in pairs with $(1,2)$ or $(1,a)$ in $\cL$ in a way which allows to delete $(1,x)$.
Thus this case also always ends by a deletion of a pair.

\subsubsection{Here 1 does not appear in any $g_1$-triple in $\cR$ and neither 2 nor $a$ appears in a $g_2$-triple in $\cR$} Hence $\cR$ contains pairs $(1,x)$, $(y,2)$ and $(z,a)$. If $x\neq y$ or $x\neq z$ we can use one of the relations
$$(1,x)+(y,2)=(1,2)+(y,x)\qquad \qquad (1,x)+(z,a)=(1,a)+(z,x)$$ and delete $(1,2)$ or $(1,a)$. So we may assume that $\cR$ contains $(1,x)$, $(x,2)$ and $(x,a)$. We see that then~$x$ must appear with~$g_1$ in~$\cL$. If it appears in a pair $(x,c)$ such that $c\neq 1$ then we can use the relation $(x,c)+(1,2) = (x,2)+(1,c)$ and delete $(x,2)$. Hence if $x$ appears with $g_1$ in $\cL$ in a pair then it is $(x,1)$.

If such a pair belongs to $\cL$ then 1 must appear with $g_2$ in $\cR$. If there is $(1,s,t)_{g_2} \in \cR$ then we try to use the relation $(1,s,t)_{g_2} + (x,2) = (2,s,t)_{g_2}+(x,1)$ and delete $(x,1)$. It could fail only if $2 \in \{s,t\}$, but this situation contradicts the assumption that 2 does not appear in $g_2$-triples in $\cR$. And if there is $(s,1)\in \cR$,  we can use one of the relations $$(s,1)+(x,2) = (x,1)+(s,2)\qquad \qquad (s,1)+(x,a)=(x,1)+(s,a)$$ and delete $(x,1)$. Therefore we are left with the case where $x$ appears with $g_1$ in $\cL$ only in triples; there must be at least one such triple.

Then, because $(1,x) \in \cR$, we know that $x$ appears also with $g_2$ in $\cL$. It cannot be in a $g_2$-triple since currently we work under assumption that there are $g_1$-triples in~$\cL$, so it appears in a pair $(c,x)$. Since either $c \neq 2$ or $c \neq a$, we can use one of the relations $$(c,x)+(1,2) = (c,2)+(1,x)\qquad \qquad (c,x)+(1,a) = (c,a)+(1,x)$$ and delete $(1,x)$. 
Thus again in this case always a pair can be deleted after using certain relations.

\subsection{Case $(1,2)$, $(a,1)$}\label{subsection_three_indices_2} There are three subcases up to swapping sides of the relation or elements of~$\Z_3$, or renaming the indices. First two concern situations when at least one of $\{1,2,a\}$ occurs in a triple in~$\cR$, in the third one all these indices appear only in pairs in~$\cR$. Because of Lemma~\ref{different_triple_types}, up to swapping $g_1$ with $g_2$ there are just two different possibilities. In the last case we see what happens when there are no such triples.

\begin{rem}\label{rem_3pairs}
Assume that on one side of the relation three different indices $\alpha, \beta, \gamma$ appear in pairs and there is a pair $(\alpha,\beta)$.
Then either all the remaining pairs on this side are equal to $(\alpha,\beta)$, $(\gamma,\alpha)$ or $(\beta,\gamma)$, or we are in the situation which was already considered in section~\ref{section_four_indices} (four different indices in pairs) or~\ref{subsection_three_indices_1} (an index repeats on the same position in two different pairs).
\end{rem}

\subsubsection{First consider the case where~1 appears in a $g_1$-triple in~$\cR$} Then, by Lemma~\ref{different_triple_types}, 2 appears with $g_2$ in~$\cR$ only in pairs. So $\cR$ contains a triple $(1,x,y)_{g_1}$ and a pair $(z,2)$. The relation $(1,x,y)_{g_1} + (z,2) = (z,x,y)_{g_1} + (1,2)$, which leads to the deletion of $(1,2)$, cannot be used only if $z \in \{x,y\}$, so we may assume that~2 appears with $g_2$ in~$\cR$ only in pairs $(x,2)$ (by Remark~\ref{rem_3pairs} in this case $(y,2) \notin \cR$).

Also, 1 appears in $\cR$ with $g_2$ and it has to be only in pairs, since there already are $g_1$-triples. If $(s,1) \in \cR$ then either, by Remark~\ref{rem_3pairs}, we are in one of the previous cases or it must be equal to $(2,1)$, because we already have $(x,2)$ and $x\neq 1$. Hence we assume that $(2,1) \in \cR$. Then 2 appears in $\cL$ with $g_1$. If in a pair, then by Remark~\ref{rem_3pairs} it is $(2,a)$, so we can use the cubic relation
$$(1,2)+(a,1)+(2,a) = (1,a)+(2,1)+(a,2)$$
in $\cL$ and delete $(2,1)$.

Thus we only have to consider the case where 2 appears with~$g_1$ in~$\cL$ just in triples. Take such a triple $(2,c,d)_{g_1}$. If $a \notin \{c,d\}$ then we use the relation $(2,c,d)_{g_1}+(a,1) = (a,c,d)_{g_1}+(2,1)$ and delete $(2,1)$, so we may assume that if 2 appears in~$\cL$ with~$g_1$ then it is always in a triple containing~$a$.

If~$a$ appears in~$\cR$ with~$g_1$ in a pair $(a,t)$, then by Remark~\ref{rem_3pairs} this pair must be equal to $(x,2)$, because $a\notin \{1,2\}$. In particular, $x=a$ and $\cR$ contains $(1,a,y)_{g_1}$. Recall that $(2,a,d)_{g_1} \in \cL$. We use the following relations in $\cL$ and $\cR$ respectively:
$$(2,a,d)_{g_1} + (1,2) = (2,a,1)_{g_1}+(d,2)\qquad \qquad (1,a,y)_{g_1}+(2,1) = (1,a,2)_{g_1}+(y,1)$$
and delete a triple. This is always possible, because $2\neq d$ and $y \neq 1$, since these indices appear together in triples.

The last possibility is that $a$ appears in $\cR$ with $g_1$ just in triples and let $(a,s,t)_{g_1}$ be such a triple. Then either we can use the relation $(a,s,t)_{g_1}+(2,1) = (a,1)+(2,s,t)_{g_1}$ and delete $(a,1)$, or all such triples contain also 2. In the latter case we get a contradiction by Lemma~\ref{counting_argument}: 2 appears with $g_1$ in $\cL$ only in triples containing $a$, and $a$ appears with $g_1$ in $\cR$ only in triples containing 2, which contradicts the fact that $(a,1) \in \cL$.

Thus here we either reduce to one of the cases described in section~\ref{subsection_three_indices_1} (by Remark~\ref{rem_3pairs}) or find a deletion of a pair or a triple.

\subsubsection{Now consider the case where 1 does not appear in any $g_1$-triple and 2 appears in a $g_2$-triple in $\cR$}\label{subsection_three_2} Thus~$\cR$ contains a pair $(1,x)$ and a triple $(2,y,z)_{g_2}$. If $x\notin \{y,z\}$ then we can use the relation $(1,x)+ (2,y,z)_{g_2} = (1,2) + (x,y,z)_{g_2}$ and delete $(1,2)$, so we may assume that $x=z$ and $(2,x,y)_{g_2}\in \cR$. Also $a$ must appear with $g_1$ in $\cR$. It cannot be in a triple since there are $g_2$-triples already, so we have a pair $(a,s) \in \cR$. If $s=1$ then $(a,1)$ can be deleted, and if not, by Remark~\ref{rem_3pairs} we obtain $x=a$, i.e. we may assume that~$\cR$ contains $(1,a)$, $(a,s)$ and $(2,a,y)_{g_2}$.

Now we check in what flows 1 can appear with~$g_2$ in~$\cR$. If in a pair $(c,1)$ then by Remark~\ref{rem_3pairs} we have $c=s$ and we use the relation $$(1,a)+(a,s)+(s,1)=(1,s)+(a,1)+(s,a)$$ in $\cR$ and delete $(a,1)$. If 1 appears in a triple $(1,d,e)_{g_2} \in \cR$ and $s \notin \{d,e\}$ then we use the relation $(1,d,e)_{g_2}+(a,s) = (a,1)+(s,d,e)_{g_2}$ and delete $(a,1)$ again. Hence we may assume that 1 occurs with $g_2$ in $\cR$ only in triples containing $s$.

Since $(1,a) \in \cR$, there must be a flow containing~$a$ with~$g_2$ in~$\cL$. If~$a$ occurs in a pair $(f,a)$, then by Remark~\ref{rem_3pairs} $f=2$. Then we can again use the cubic relation between pairs, but this time in~$\cL$, and delete $(1,a)$. Thus we may assume that $a$ occurs with $g_2$ in $\cL$ only in triples. Moreover, if $(a,h,i)_{g_2}$ is such a triple, then either we can use the relation $(a,h,i)_{g_2}+(1,2) = (2,h,i)_{g_2}+(1,a)$ and delete $(1,a)$, or we may assume that $(a,2,i)_{g_2}\in \cL$ and every $g_2$-triple in $\cL$ containing~$a$ contains also~2.

In this situation we already have two configurations with 2 with $g_2$ in $\cL$, but only one in $\cR$. Thus 2 appears more times with $g_2$ in $\cR$. First assume it is only in triples and take $(2,j,k)_{g_2} \in \cR$. If $a \notin \{j,k\}$ then we can use the relation $(1,a) + (2,j,k)_{g_2} = (1,2) + (a,j,k)_{g_2}$ and delete $(1,2)$. Hence we may assume that a $g_2$-triple containing~2 always contains also~$a$, but this is impossible by Lemma~\ref{counting_argument} applied to occurrences of 2 and $a$ with $g_2$, because $(1,2) \in \cL$.

And if there is a pair $(m,2) \in \cR$, then by Remark~\ref{rem_3pairs} we have $m=a$ and $s=2$ (recall that $(1,a), (a,s) \in \cR$). In particular, in $\cR$ there are $(a,2)$ and $(1,2,e)_{g_2}$.
Then we use the following relations in $\cL$ and $\cR$ respectively
$$(a,2,i)_{g_2}+(a,1) = (a,2,1)_{g_2}+(a,i)\qquad \qquad (1,2,e)_{g_2}+(1,a)=(1,2,a)_{g_2}+(1,e)$$
and delete $(a,2,1)_{g_2}$. This is possible since $a\neq i$ and $1\neq e$, as they appear together in triples.

As before, this case ends by reducing to~\ref{subsection_three_indices_1} or by finding a pair or a triple to delete.

\subsubsection{Here we assume that 1 appears only in pairs in $\cR$, and $2$ with $g_2$ and $a$ with $g_1$ also appear only in pairs in $\cR$} Thus $\cR$ contains pairs $(1,x)$, $(x',2)$, $(y,1)$, and $(a,y')$. If $x\neq x'$ or $y\neq y'$ then we can swap elements in pairs and delete $(1,2)$ or $(a,1)$. Hence we may assume that $\cR$ contains $(1,x)$, $(x,2)$, $(y,1)$ and $(a,y)$. Now by Remark~\ref{rem_3pairs} either we are in the case~\ref{subsection_three_indices_1} or  we have $(y,1)=(2,1)$. In the latter case we can delete $(1,2)$ after applying the cubic relation $$(1,x)+(x,2)+(2,1) = (1,2)+(2,x)+(x,1).$$

\section{Only two different indices in pairs}\label{section_two_indices}

Assume that $(1,2) \in \cL$ and the only pairs in~$\cL$ are $(1,2)$ and possibly $(2,1)$. Moreover, in $\cR$ also at most two indices appear in pairs -- otherwise we are in one of the previous cases. Thus if $\cR$ contains pairs $(1,x)$ and $(y,2)$ then $x=2$ and $y=1$, so we have an immediate deletion. Hence we are left with two subcases, depending on which index from $(1,2)$ appears in~$\cR$ just in triples, since by Lemma~\ref{different_triple_types} both cannot be in triples. These subcases are symmetric, hence we consider just one of them.

Assume that $(1,x) \in \cR$ and $(2,s,t)_{g_2} \in \cR$. If $x \notin \{s,t\}$ then we can use the relation $(2,s,t)_{g_2}+(1,x) = (x,s,t)_{g_2}+(1,2)$ and delete $(1,2)$. Hence we may assume that if 2 appears with $g_2$ in $\cR$ then it is always in a triple containing~$x$, and there is at least one such triple, which implies that $x \neq 2$.

We check in what flows~$x$ appears with~$g_2$ in~$\cL$. It cannot be in a pair, because the only pairs which can be in~$\cL$ are $(1,2)$ and $(2,1)$, but by looking at flows in~$\cR$ we know that $x\notin \{1,2\}$. If there is a triple $(x,c,d)_{g_2} \in \cL$ such that $2\notin \{c,d\}$ then we can use the relation $(x,c,d)_{g_2} + (1,2) = (2,c,d)_{g_2} + (1,x)$ and delete $(1,x)$. Hence we may assume that $x$ appears with $g_2$ in $\cL$ only in triples containing 2. We apply Lemma~\ref{counting_argument} to get a contradiction: $x$ appears in $\cL$ with $g_2$ only in triples containing 2, and 2 appears in $\cR$ with $g_2$ only in triples containing $x$, but also we have $(1,2) \in \cL$.

Hence here we either show that we are actually in one of the previous cases or find a deletion of a pair. This case finishes the whole proof.

\bibliographystyle{plain}
\bibliography{Xbib}

\begin{thebibliography}{10}

\bibitem{BW}
W.~Buczy\'nska and J.~A. Wi\'{s}niewski.
\newblock On geometry of binary symmetric models of phylogenetic trees.
\newblock {\em J. Eur. Math. Soc.}, 9(3):609--635, 2007.

\bibitem{CFSM}
M.~Casanellas, J.~Fern\'{a}ndez-S\'{a}nchez, and M.~Micha\l{}ek.
\newblock Local description of phylogenetic group-based models.
\newblock {\em Collect. Math.}, 66:203--225, 2014.

\bibitem{ToricBook}
D.~Cox, J.~Little, and H.~Schenck.
\newblock {\em Toric varieties}, volume 124 of {\em Grad. Stud. Math.}
\newblock Amer. Math. Soc., 2011.

\bibitem{phylocomp}
M.~Donten-Bury and M.~Micha{\l}ek.
\newblock Phylogenetic invariants for group based models.
\newblock {\em J. Algebr. Stat.}, 3(1):44--63, 2012.

\bibitem{Draisma2012}
J.~Draisma and R.~H. Eggermont.
\newblock Finiteness results for abelian tree models.
\newblock {\em J. Eur. Math. Soc.}, 17:711--738, 2015.

\bibitem{DK}
J.~Draisma and J.~Kuttler.
\newblock On the ideals of equivariant tree models.
\newblock {\em Math. Ann.}, 344(3):619--644, 2009.

\bibitem{ES}
S.~N. Evans and T.~P. Speed.
\newblock Invariants of some probability models used in phylogenetic inference.
\newblock {\em Ann. Statist.}, 21(1):355--377, 1993.

\bibitem{matroids}
M.~Laso\'{n} and M.~Micha{\l}ek.
\newblock On the toric ideal of a matroid.
\newblock {\em Adv. Math.}, 259:1--12, 2014.

\bibitem{Man}
C.~Manon.
\newblock The algebra of {Conformal} {Blocks}.
\newblock {\em preprint at arXiv:0910.0577}, 2009.

\bibitem{mateusz}
M.~Micha{\l}ek.
\newblock Algebraic varieties representing group-based {Markov} processes on
  trees.
\newblock {\em J. Algebra}, 339:339--356, 2011.

\bibitem{constrdeg}
M.~Micha{\l}ek.
\newblock Constructive degree bounds for group-based models.
\newblock {\em J. Combin. Theory Ser. A}, 120(7):1672--1694, 2013.

\bibitem{Ph}
C.~Semple and M.~Steel.
\newblock {\em Phylogenetics}.
\newblock Oxford Univ. Press, 2003.

\bibitem{Sturmfelsksiazka}
B.~Sturmfels.
\newblock {\em Gr\"{o}bner bases and convex polytopes}, volume~8 of {\em Univ.
  Lecture Ser.}
\newblock Amer. Math. Soc., 1996.

\bibitem{SS}
B.~Sturmfels and S.~Sullivant.
\newblock Toric ideals of phylogenetic invariants.
\newblock {\em J. Comput. Biology}, 12:204--228, 2005.

\bibitem{sull}
S.~Sullivant.
\newblock Toric fiber products.
\newblock {\em J. Algebra}, 316:560--577, 2007.

\bibitem{SSE}
L.~A. Sz\'{e}kely, M.~A. Steel, and P.~L. Erd\"{o}s.
\newblock Fourier calculus on evolutionary trees.
\newblock {\em Appl. Math.}, 14(2):200--210, 1993.

\end{thebibliography}

\end{document}